\title{Categories meet semigroups in various ways}
\author{Bertalan Pécsi}
\date{2025}
\theoremstyle{definition}
\newtheorem{thm}{Theorem}[section]
\newtheorem{lemma}[thm]{Lemma}
\newtheorem{prop}[thm]{Proposition}
\newtheorem{deff}[thm]{Definition} 
\newtheorem{ex}[thm]{Examples}
\newtheorem{rem}[thm]{Remark}
\newcommand{\ax}[1]{\pmb{\textsc{#1.}}}
\newcommand{\emp}[1]{\textsl{#1}}
\newcommand{\ct}{\mathcal}
\newcommand{\dia}{\xymatrix}
\newcommand{\al}{\underset}
\newcommand{\fl}{\overset}
\newcommand{\Ob}{\mathop{\rm Ob}}
\newcommand{\fii}{\varphi}
\newcommand{\eps}{\varepsilon}
\newcommand{\ro}{\varrho}
\newcommand{\Th}{\Theta}
\renewcommand{\c}{\cdot}
\renewcommand{\b}{\fl{ }{\bullet}} 
\newcommand{\s}[1]{{}^{\ast}\!{#1}}
\renewcommand{\S}{\mathrm S}
\newcommand{\K}{\mathrm K}
\newcommand{\Set}{\ct Set}
\newcommand{\tild}{\widetilde}
\newcommand{\barr}{\overline}
\renewcommand{\P}{\mathbf P}
\newcommand{\xxi}{{\bm\xi}}
\begin{document}

\maketitle

\begin{abstract}
This paper touches on several interaction points of semigroups and constructions from category theory.   
\end{abstract}

\section*{Introduction}
(...)

Throughout the paper we write compositions of arrows of a category in the order of application left to right, 
and consequently apply functions and functors to the \emp{right} to their arguments, often in exponents,
so that $x^f$ (or $xf$) denotes the image of element $x$ under mapping $f$.

Accordingly, epimorphisms, monomorphisms, split epi, (...)

\section{Wired categories}

\begin{deff}
We call a category $\ct C$ equipped with a chosen arrow $w_{a,b}:a\to b$ for each pair of 
objects $a,b\in\Ob\ct C$ a \emp{wired category} if $w_{a,a}=1_a$ for each $a\in \Ob\ct C$.
\end{deff}

\begin{prop} \label{wired-semigroup}
Any small wired category $(\ct C,w)$ induces a semigroup structure on its set of arrows,
the operation of which extends the original composition of $\ct C$.
\end{prop}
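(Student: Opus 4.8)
The plan is to turn the (partial) composition of $\ct C$ into a total operation by inserting the wires to bridge any gap between non-composable arrows. Write $\mathrm{Arr}(\ct C)$ for the set of all arrows of $\ct C$; since $\ct C$ is small this is a genuine set. For $f\colon a\to b$ and $g\colon c\to d$ I would define
\[
 f * g \ :=\ f\c w_{b,c}\c g\ \colon\ a\longrightarrow d .
\]
This is well defined: $w_{b,c}$ is the wire $b\to c$, so the right-hand side is a legitimate three-fold composite in $\ct C$, and its value depends only on $f$ and $g$.

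First I would verify that $*$ extends the original composition. If $f\colon a\to b$ and $g\colon b\to d$ are composable in $\ct C$, then the middle object repeats, $w_{b,c}=w_{b,b}=1_b$ by the defining axiom of a wired category, and hence $f*g=f\c 1_b\c g=f\c g$. This is the one and only place the axiom $w_{a,a}=1_a$ is used.

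Next, associativity. Given $f\colon a\to b$, $g\colon c\to d$, $h\colon e\to m$, I would simply unwind both bracketings, using associativity of composition in $\ct C$: on the one hand $(f*g)*h=(f\c w_{b,c}\c g)\c w_{d,e}\c h$, and on the other $f*(g*h)=f\c w_{b,c}\c(g\c w_{d,e}\c h)$, and both collapse to the single arrow $f\c w_{b,c}\c g\c w_{d,e}\c h\colon a\to m$. Thus $(\mathrm{Arr}(\ct C),*)$ is a semigroup whose operation restricts to composition, which is the assertion.

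I do not expect a genuine obstacle here: the verification is routine once the operation is written down. The point worth flagging is a conceptual one rather than a computational difficulty — associativity of $*$ needs \emph{no} further compatibility of the wires (no $w_{a,b}\c w_{b,c}=w_{a,c}$, no naturality), because the inserted wires line up head-to-tail in exactly the same order on both sides of the associativity law; correspondingly, the resulting semigroup really does depend on the choice of $w$ and not on $\ct C$ alone.
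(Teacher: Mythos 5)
Your proof is correct and follows essentially the same route as the paper: define $f * g := f \c w_{b,c} \c g$, observe that associativity reduces to associativity of composition in $\ct C$ (both bracketings equal $f\c w_{b,c}\c g\c w_{d,e}\c h$), and use $w_{b,b}=1_b$ to see that $*$ extends the given composition. Your added remark that no compatibility condition on the wires is needed is accurate and consistent with the paper.
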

\begin{proof}
For arrows $\fii:a\to b$ and $\psi:c\to d$ define
$$ \fii \,\b\, \psi \ := \ \fii \c w_{b,c} \c  \psi \,.$$
This is an associative operation, since if $\ro:e\to f$ is also arbitrary,
we get $\fii\b\psi\b\ro\ = \fii \c w_{b,c}\c \psi\c w_{d,e}\c \ro$.\\
For consecutive arrows, i.e. if $b=c$, $\fii\b\psi\ =\ \fii\c\psi$ because $w_{b,b}$
was assumed to be the identity.
\end{proof}

\noindent
Recall that the \emp{Karoubi envelope} (alternatively, \emp{idempotent} or \emp{Cauchy
completion}) of a semigroup $S$ consists of the idempotents of $S$ as objects
and triples $(e,x,f)\in S^3$ as arrows $e\to f$ where $e,f$ are idempotents and $exf=x$.
The triple $(e,e,e)$ serves as the identity on $e$, and the composition is defined as follows:
$$(e,x,f) \,\c\, (f,y,g)\ := \ (e,\,xy,\,g)\,.$$

\begin{deff}
Let $(\ct C,w)$ and $(\ct D,v)$ be wired categories. A functor $F:\ct C\to\ct D$ is called
a \emp{wired functor} if for all objects $a,b\in\Ob\ct C$ we have
$\left( w_{a,b} \right)^F\; = \; v_{a^F\!,\,b^F} $\,. \\
We denote by $\ct{WC}at$ the category of small wired categories and wired functors. 
\end{deff}
\noindent
Note that there would be no further restriction on natural transformations
between wired functors.

\begin{prop}
The Karoubi envelope functor $\K:\ct Sgr \to \ct Cat$ factors through $\ct{WC}at$, and
it is right adjoint to the construction $\S:\ct{WC}at\to\ct Sgr$ in \ref{wired-semigroup}.
\end{prop}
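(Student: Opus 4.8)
The plan is to first lift $\K$ to a functor valued in $\ct{WC}at$, then exhibit the unit of the adjunction and verify its universal property (equivalently, one could build the counit and check the triangle identities). For the wiring on a Karoubi envelope $\K(S)$: given idempotents $e,f$ of $S$ I would set $w_{e,f}:=(e,ef,f)$. Since $e(ef)f=ef$ this is a genuine arrow $e\to f$ of $\K(S)$, and $w_{e,e}=(e,e,e)=1_e$, so $(\K(S),w)$ is a wired category. For a semigroup homomorphism $h:S\to T$ the induced functor $\K(h)$ acts by $(e,x,f)\mapsto(e^h,x^h,f^h)$, hence $(w_{e,f})^{\K(h)}=(e^h,(ef)^h,f^h)=(e^h,e^hf^h,f^h)=w_{e^h,\,f^h}$, so $\K(h)$ is wired and $\K$ factors through $\ct{WC}at$.

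Next, for a small wired category $(\ct C,w)$ I would define the unit $\eta=\eta_{(\ct C,w)}:\ct C\to\K(\S(\ct C,w))$. Each $1_a$ is an idempotent of $\S(\ct C,w)$, and $1_a\b\fii\b 1_b=\fii$ for every $\fii:a\to b$, so one can put $a\mapsto 1_a$ on objects and $\fii\mapsto(1_a,\fii,1_b)$ on arrows. Preservation of identities is immediate; preservation of composition follows because $\b$ restricts to ordinary composition on consecutive arrows (Proposition~\ref{wired-semigroup}); and $\eta$ is wired because $(w_{a,b})^\eta=(1_a,w_{a,b},1_b)=(1_a,1_a\b 1_b,1_b)$, which is exactly the chosen wire of $\K(\S(\ct C,w))$ from $1_a$ to $1_b$.

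Finally I would check the universal property of $\eta$. Let $S$ be a semigroup and $G:(\ct C,w)\to\K(S)$ a wired functor; write $a^G$ for the idempotent assigned to an object $a$ and $\fii^G=(a^G,\langle\fii\rangle,b^G)$ for the value on an arrow $\fii:a\to b$, so that $a^G\langle\fii\rangle b^G=\langle\fii\rangle$, whence (using $b^Gb^G=b^G$) $\langle\fii\rangle b^G=\langle\fii\rangle$, and similarly $c^G\langle\psi\rangle=\langle\psi\rangle$ for $\psi:c\to d$. Define $\bar G:\S(\ct C,w)\to S$ by $\fii^{\bar G}:=\langle\fii\rangle$. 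The one substantial point is that $\bar G$ is a semigroup homomorphism: applying $G$ to $\fii\b\psi=\fii\c w_{b,c}\c\psi$ and using that $G$ is wired,
$$(\fii\b\psi)^G\;=\;(a^G,\langle\fii\rangle,b^G)\c(b^G,b^Gc^G,c^G)\c(c^G,\langle\psi\rangle,d^G)\;=\;\bigl(a^G,\ \langle\fii\rangle\,b^Gc^G\,\langle\psi\rangle,\ d^G\bigr),$$
and the two absorption identities collapse the middle coordinate to $\langle\fii\rangle\langle\psi\rangle=\fii^{\bar G}\psi^{\bar G}$. Then $\eta\c\K(\bar G)=G$ by inspection, the object part because $G$ preserves identities (so $1_a^G=1_{a^G}$ gives $\langle 1_a\rangle=a^G$); and $\bar G$ is the unique such homomorphism, since every element of $\S(\ct C,w)$ is an arrow of $\ct C$ and its image under any competitor is pinned down by $\eta\c\K(\bar G)=G$. (The counit would be $\eps_S:\S\K(S)\to S$, $(e,x,f)\mapsto x$, and the triangle identities reduce to the same absorption computation.) I expect this homomorphism check — the interaction of the wired-functor equation with idempotent absorption in $S$ — to be the only step that is not routine bookkeeping.
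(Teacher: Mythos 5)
Your proposal is correct and follows essentially the paper's route: you choose the same wiring $w_{e,f}=(e,ef,f)$ on the Karoubi envelope and establish the same correspondence between wired functors $(\ct C,w)\to S^\K$ and semigroup homomorphisms $(\ct C,w)^\S\to S$, which is exactly the homset bijection the paper asserts. The only difference is presentational: the paper states the bijection directly, while you package it as the unit $\fii\mapsto(1_a,\fii,1_b)$ together with its universal property, filling in the wired-functor and idempotent-absorption computations that the paper leaves implicit.
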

\begin{proof}
Define the wires of the Karoubi envelope by $w_{e,f}\, := \, ef$ for idempotents $e,f\,\in S$.\\
The elements of both homsets in the adjunction to be proved, naturally correspond to maps 
from the set of arrows of a wired category $(\ct C,w)$
to the semigroup $S$ that converts composition in $\ct C$ to the operation of $S$.
\end{proof}

\begin{prop}
A semigroup $S$ has enough idempotents (for every $x$ there are idempotents
$e,f$ such that $exf=x$) if and only if $S^{\K\S}\to S$, the counit of the 
above adjunction evaluated at $S$, is surjective.
\end{prop}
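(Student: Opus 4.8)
The plan is to compute the counit $\eps_S$ explicitly; once this is done, the asserted equivalence becomes a verbatim restatement of the ``enough idempotents'' condition.

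First I would unwind the hom-set bijection from the proof of the adjunction at the wired category $\K(S)$ itself. Under that bijection a wired functor $G:\ct C\to\K(S)$ corresponds to the map sending an arrow $\fii$ of $\ct C$ to the middle component of the triple $\fii^G$, and the counit $\eps_S$ is the homomorphism corresponding to $1_{\K(S)}$. Hence $\eps_S:S^{\K\S}\to S$ is simply the map that sends an arrow $(e,x,f)$ of the Karoubi envelope --- i.e.\ a typical element of the semigroup $S^{\K\S}=\S(\K(S))$ --- to its middle entry $x$.

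Next I would record that this map is genuinely a semigroup homomorphism, which also serves as a sanity check on the description of $\eps_S$. By Proposition~\ref{wired-semigroup} together with the wiring $w_{f,e'}=fe'$ of the Karoubi envelope, the product in $S^{\K\S}$ is $(e,x,f)\,\b\,(e',x',f')=(e,\;x\c fe'\c x',\;f')$; and since $exf=x$ forces $xf=x$ while $e'x'f'=x'$ forces $e'x'=x'$, the middle entry simplifies to $x\c fe'\c x'=xx'$, as needed.

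Finally the content of the statement is immediate: the image of $\eps_S$ is exactly $\{\,x\in S:\text{there exist idempotents }e,f\text{ with }exf=x\,\}$, and this subset is all of $S$ precisely when $S$ has enough idempotents, which establishes both implications at once. The only point demanding any care --- the ``hard part'', such as it is --- is the first step, namely extracting the concrete form of $\eps_S$ from the compressed proof of the preceding proposition; once that identification is made, nothing further remains to be checked.
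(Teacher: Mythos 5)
Your argument is correct, and since the paper states this proposition without proof, your computation supplies exactly the intended argument: identifying the counit as $(e,x,f)\mapsto x$ via the hom-set correspondence, checking multiplicativity using $w_{f,e'}=fe'$ together with $xf=x$ and $e'x'=x'$, and reading off that its image is precisely the set of elements supported by idempotents. Nothing further is needed.
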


\begin{deff}
The \emp{principal identifier relation} of a small wired category $(\ct C,w)$ is the
binary relation $\Th$ on the set of arrows of $\ct C$, for which 
$\al{a\to x}\alpha \,\Th\, \al{b\to y}\beta$
holds if and only if the following diagram commutes.
$$ \dia{a \ar[r]^{w_{a,b}} \ar[d]_\alpha \ar[rrd]_(.3)\alpha & 
b \ar[r]^{w_{b,a}} \ar[d]^(.3){\!\beta} \ar[rrd]_(.3)\beta & 
a \ar[r]^{w_{a,b}} \ar[d]^(.3){\!\alpha} & b \ar[d]^\beta \\ 
x \ar[r]_{w_{x,y}} & y \ar[r]_{w_{y,x}} & x \ar[r]_{w_{x,y}} & y } $$
\end{deff}
It is clear that $\Th$ is reflexive and symmetric, but in general it is not transitive.
Also, if $\alpha,\beta$ are parallel arrows $a\to x$ (i.e. $a=b,\ x=y$ above) then
$\alpha \,\Th\, \beta$ implies $\alpha=\beta$.

(....)

\section{Regular semigroups}
Recall that a semigroup is called \emp{regular} if every element $x$ has a \emp{pseudoinverse}
$y$, in the sense that $xyx=x$.

\noindent
Also recall that an arrow in a category is called a \emp{split epimorphism} if it has a left inverse,
and dually \emp{split monomorphisms} are the arrows which have right inverse. 

\begin{prop} \label{splitfact-regular}
Let $(\ct C,w)$ be a wired category such that each arrow $\fii\in \ct C$ can be written as $\fii=\eps\c\mu$
where $\eps$ is split epimorphism and $\mu$ is split monomorphism. 
Then the semigroup $S=(\ct C,w)^\S$ is regular.
\end{prop}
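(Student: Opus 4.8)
The plan is to produce, for every arrow $\fii$ of $\ct C$, an explicit pseudoinverse inside the semigroup $S=(\ct C,w)^\S$, using only the factorization hypothesis together with the normalization $w_{a,a}=1_a$ built into the notion of wired category. So I would fix $\fii:a\to b$ and invoke the hypothesis to write $\fii=\eps\c\mu$ with $\eps:a\to m$ a split epimorphism and $\mu:m\to b$ a split monomorphism. Reading the definitions in the left-to-right composition convention of the paper, $\eps$ then has a left inverse $\eps':m\to a$ with $\eps'\c\eps=1_m$, and $\mu$ has a right inverse $\mu':b\to m$ with $\mu\c\mu'=1_m$.

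The candidate pseudoinverse is $\psi:=\mu'\c\eps':b\to a$, and the only thing to do is compute $\fii\,\b\,\psi\,\b\,\fii$. Since $\psi$ has domain $b$ and codomain $a$, the wires that the operation $\b$ inserts are $w_{b,b}$ and $w_{a,a}$, both identities; hence $\fii\,\b\,\psi\,\b\,\fii$ equals the ordinary composite $\fii\c\psi\c\fii=\eps\c\mu\c\mu'\c\eps'\c\eps\c\mu$ in $\ct C$. Collapsing $\mu\c\mu'$ and $\eps'\c\eps$ to $1_m$ leaves $\eps\c\mu=\fii$, so $\psi$ is a pseudoinverse of $\fii$. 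As $\fii$ was arbitrary, $S$ is regular.

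There is no real obstacle here: the argument is essentially the classical fact that a composite of a split epi with a split mono is a von Neumann regular element of the arrow semigroup. The hardest part is merely bookkeeping — first, getting the handedness of ``left/right inverse'' correct under the left-to-right convention, so that the relevant identities are $\eps'\c\eps=1_m$ and $\mu\c\mu'=1_m$ and not their reverses; and second, observing that taking $\psi$ to run $b\to a$ (rather than some arrow with other endpoints) is exactly what forces the inserted wires to be the trivial $w_{b,b}$ and $w_{a,a}$, so that the whole computation can be carried out inside the ordinary composition of $\ct C$. I would also note that the factorization is not required to be natural, $w$-compatible, or unique: one split-epi/split-mono factorization of each single arrow is all that the statement needs.
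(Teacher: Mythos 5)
Your proof is correct and follows essentially the same route as the paper: factor $\fii=\eps\c\mu$, take $\psi=\mu'\c\eps'$ built from the splittings, and check $\fii\,\b\,\psi\,\b\,\fii=\fii$, the wires being identities since the arrows involved are consecutive. Your explicit remark that $\psi:b\to a$ forces the inserted wires to be $w_{b,b}$ and $w_{a,a}$ is just a spelled-out version of the fact, already established in Proposition \ref{wired-semigroup}, that $\b$ extends ordinary composition.
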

\begin{proof}
Let $x\in S$, i.e. $x\in \ct C$ be arbitrary with $x:a\to b$, and write it as $x=em$ with
$e:a\to c$ and $m:c\to b$ according to the hypothesis.
Let $e':c\to a$ be a left inverse of $e$ and $m':b\to c$ be a right inverse of $m$.
Set $y:= m'\,e'$. Then we have
$$xyx\ =\ em\; m'e' \;em\ =\ e\,1_c\,1_c\,m\ = \ em\ =\ x \,.$$
\end{proof}

\begin{lemma}
Assume that every arrow $\fii$ in a category $\ct C$ can be decomposed as $\fii = \eps\c\mu$
where $\eps$ is split epimorphism and $\mu$ is split monomorphism. 
Then $\ct C$ admits a split epi -- split mono factorization system.
\end{lemma}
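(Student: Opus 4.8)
The plan is to take $\mathcal E$ to be the class of split epimorphisms and $\mathcal M$ the class of split monomorphisms, and to verify that $(\mathcal E,\mathcal M)$ satisfies the axioms of an orthogonal factorization system: each class contains all isomorphisms and is closed under composition; every arrow $\fii$ factors as $\eps\c\mu$ with $\eps\in\mathcal E$ and $\mu\in\mathcal M$; and every commutative square whose left edge lies in $\mathcal E$ and whose right edge lies in $\mathcal M$ has a unique diagonal filler. The middle axiom is exactly the hypothesis, so nothing is to be done there, and once the three axioms are in place the usual formal consequences (the relevant cancellation properties, uniqueness of factorizations up to unique iso, and the matching $\mathcal E={}^{\perp}\mathcal M$, $\mathcal M=\mathcal E^{\perp}$) come for free; I would not reprove those. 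No use of a wiring on $\ct C$ is needed.

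For the closure axiom: an isomorphism is at once a split epi and a split mono. If $\eps:a\to b$ and $\eps':b\to c$ are split epis with sections $s$ and $s'$ (so $s\c\eps=1_b$ and $s'\c\eps'=1_c$), then $s'\c s$ is a section of $\eps\c\eps'$, since $s'\c s\c\eps\c\eps'=s'\c 1_b\c\eps'=1_c$; the argument for split monos is dual. I would also record in passing that a morphism $\eps:a\to b$ lying in $\mathcal E\cap\mathcal M$, say with $s\c\eps=1_b$ and $\eps\c r=1_a$, is an isomorphism, by $s=s\c(\eps\c r)=(s\c\eps)\c r=r$.

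The step I expect to be the real obstacle is the unique diagonal fill-in, since a ``half diagonal'' manufactured from a section of the left edge has no obvious reason to also match the triangle on the other side. I expect it to resolve as follows. Let $\eps\c g=f\c m$ be a commutative square with $\eps:a\to b$ split epi, section $s:b\to a$ ($s\c\eps=1_b$), and $m:c\to d$ split mono, retraction $r:d\to c$ ($m\c r=1_c$). Composing the square equation on the right with $r$ gives $\eps\c g\c r=f\c m\c r=f$; substituting this into $s\c f$ and using $s\c\eps=1_b$ gives $s\c f=s\c\eps\c g\c r=g\c r$. Thus the two obvious candidate diagonals agree, and setting $\delta:=s\c f=g\c r$ we get $\eps\c\delta=\eps\c g\c r=f$ and $\delta\c m=s\c f\c m=s\c\eps\c g=g$, so $\delta$ is a filler. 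Uniqueness is immediate, since a split epi is an epimorphism: $\eps\c\delta=\eps\c\delta'$ forces $\delta=\delta'$. (This in fact shows that split epis are orthogonal to all monomorphisms; we only need orthogonality against split monos.) That completes the verification, and the upshot is that the only non-formal ingredient is the factorization hypothesis itself.
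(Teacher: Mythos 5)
Your proof is correct and is the natural argument for this lemma (whose details the paper leaves out): take $\mathcal E=$ split epis, $\mathcal M=$ split monos, note both classes contain isomorphisms and are closed under composition, and obtain the unique diagonal of a square $\eps\c g=f\c m$ as $\delta=s\c f=g\c r$ from a left inverse $s$ of $\eps$ and a right inverse $r$ of $m$, uniqueness following since split epis are epimorphisms. Your side remarks (that $\mathcal E\cap\mathcal M$ consists of isomorphisms and that the maximality conditions $\mathcal E={}^{\perp}\mathcal M$, $\mathcal M=\mathcal E^{\perp}$ then follow formally) are also accurate, so nothing is missing.
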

(...)

\begin{thm}
A semigroup $S$ is regular if and only if it has enough idempotents and
its Karoubi envelope admits a \emp{split epi} -- \emp{split mono} factorization.
\end{thm}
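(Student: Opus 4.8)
The plan is to establish the two implications separately, with the preceding Lemma carrying most of the weight in the less trivial direction.

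For the forward implication, suppose $S$ is regular. That $S$ has enough idempotents is immediate: if $xyx=x$ then $xy$ and $yx$ are idempotents and $(xy)\,x\,(yx)=x$. For the Karoubi envelope, the Lemma reduces the task to exhibiting, for each arrow $(e,x,f)$ of $\K(S)$, a factorization into a split epimorphism followed by a split monomorphism. Here I would first record that $exf=x$ forces $ex=x=xf$. Then, starting from an arbitrary pseudoinverse $y_0$ of $x$, I would pass to the \emp{proper} pseudoinverse $y:=y_0xy_0$ (so $xyx=x$ and $yxy=y$) and then to $z:=fye$, which still satisfies $xzx=x$ and $zxz=z$ but in addition $fz=z$ and $ze=z$. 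The factorization I propose is
$$(e,x,f)\ =\ (e,x,zx)\cdot(zx,zx,f)\,,$$
through the idempotent object $zx$, with $(zx,z,e)$ a left inverse of the first factor and $(f,zx,zx)$ a right inverse of the second. What then remains is the routine bookkeeping: checking that each of these triples is a legitimate arrow of $\K(S)$ (using $exf=x$ together with the four identities for $z$) and that the relevant composites are the identities of the right objects.

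For the converse, suppose $S$ has enough idempotents and $\K(S)$ admits a split epi -- split mono factorization. Given $x\in S$, use ``enough idempotents'' to choose idempotents $e,f$ with $exf=x$, so that $(e,x,f)$ is an arrow of $\K(S)$, and factor it as $(e,a,g)\cdot(g,b,f)$ with $(e,a,g)$ split epi and $(g,b,f)$ split mono. The arrow conditions $eag=a$ and $gbf=b$ give $ag=a$ and $gb=b$; composability gives $ab=x$; a left inverse of $(e,a,g)$ gives $a'$ with $a'a=g$, and a right inverse of $(g,b,f)$ gives $b'$ with $bb'=g$. Then $y:=b'a'$ is a pseudoinverse of $x$, since
$$x\,y\,x\ =\ a\,(bb')\,(a'a)\,b\ =\ a\,g\,g\,b\ =\ a\,g\,b\ =\ a\,b\ =\ x\,.$$

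The step I expect to be the genuine obstacle is identifying the middle object in the forward direction. The obvious candidates fail: the idempotent $xz$ (or $zx$) realized as a subidentity arrow $e\to xz$ is essentially never a split epimorphism, and one has to see instead that the middle object is $zx$ with $(e,x,zx)$ itself — not a subidentity — as the split epi part, sectioned by $(zx,z,e)$. Everything else should reduce to mechanical verification. It is also worth noting why ``enough idempotents'' must appear as a separate hypothesis in the converse: without idempotents $\K(S)$ is the empty category, which vacuously admits every factorization system, whereas $S$ — take $(\mathbb{N},+)$ — need not be regular.
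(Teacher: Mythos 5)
Your proof is correct, and in the forward direction it follows essentially the paper's own route: both factor $(e,x,f)$ through the idempotent object built from $yx$, with the first factor of the form $(e,x,\cdot)$ as the split epi and the second as the split mono, and both rely on the preceding Lemma to pass from the arrow-wise decomposition to the factorization claimed in the statement. The differences are modest but worth recording. In the forward direction the paper takes an \emph{arbitrary} pseudoinverse $y$, keeps the middle object $yx$, and compensates inside the splittings rather than normalizing: its left inverse of $(e,x,yx)$ is $(yx,\,yxye,\,e)$ and its right inverse of $(yx,yx,f)$ is $(f,\,fyx,\,yx)$, the extra factors $e$ and $f$ doing exactly the work that your up-front normalization $z:=fye$ (with $y$ a mutual pseudoinverse) does; the two devices are interchangeable, yours buying cleaner section/retraction triples at the cost of a preliminary step. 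In the converse the paper simply cites Proposition~\ref{splitfact-regular}: it views $S^{\K}$ as a wired category via $w_{e,f}=ef$, concludes that the induced semigroup $S^{\K\S}$ is regular, and uses that enough idempotents makes the counit $S^{\K\S}\to S$ surjective, so regularity descends to $S$. Your argument instead transcribes that same computation directly into Karoubi triples ($y=b'a'$ and $xyx=a(bb')(a'a)b=agb=ab=x$), which is more self-contained and makes explicit where ``enough idempotents'' enters, namely in representing $x$ by an arrow $(e,x,f)$ at all; your $(\mathbb{N},+)$ remark illustrating its necessity is apt, provided one takes the positive integers (with $0$ included, $0$ is an idempotent and the envelope is nonempty). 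All verifications in your version check out.
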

\begin{proof}
First, if $S$ is regular, then any $x\in S$ is supported by idempotents $xy$ and $yx$, 
where $y$ is a pseudoinverse of $x$. \\
Let $(e,x,f)$ be an arrow in the Karoubi envelope $S^\K$, i.e. $e,f$ are idempotents
and $ex=x$ and $xf=x$, and let $y$ be a pseudoinverse of $x$. Then by $x=xyx$ we have
$$(e,x,f) = (e,x,yx) \,\c \, (yx, yx, f) \,. $$
Here $(yx,yxye,e)$ is a left inverse of $(e,x,yx)$ because 
$$(yx,yxye,e) \,\c\, (e,x,yx)\ =\ (yx,\, yxy\,ex,\, yx)\ =\ (yx,yx,yx)\ =\ 1_{yx} \,,$$
and $(f, fyx, yx)$ is a right inverse of $(yx,yx,f)$ because
$$(yx,yx,f) \,\c\, (f,fyx,yx)\ =\ (yx,\, yx\,fyx,\, yx)\ =\ (yx,yx,yx)\ =\ 1_{yx}\,.$$

\noindent
The converse direction follows from prop.\ref{splitfact-regular}.
\end{proof}

\section{Lax spiders}
Another approach in studying the category-like structure determined by a semigroup $S$ is that instead of
a single object we enable a whole set of objects as the ``domain'' / ``codomain'' of an element of $S$. \\
Here we describe one way to formulate this point of view. \\

\begin{deff}
Let $S$ be a semigroup and let $T$ be an ordered semigroup. We call a function $f:S\to T$
\emp{lax morphism} or \emp{subhomomorphism} if for all $x,y\in S$ it satisfies
$$ x^f \c y^f \le (x\c y)^f \,. $$
\end{deff}
Note that lax morphisms in the above sense correspond to \emp{lax functors} in a suitable setup 
when both $S$ and $T$ are turned into monoidal categories
(by adding an identity element and other necessary structure).\\

Let $E$ denote the set of idempotents in $S$, $\P(E)$ the set of its subsets, and
consider the rectangular band on $\P(E)\times \P(E)$ (so that its operation is
$(A,B)\,\c\,(C,D)\ =\ (A,D)$) with partial order $(A,B) \le (A',B')$ 
whenever $A\subseteq A'$ and $B\subseteq B'$. \\

\noindent
Then we get a lax morphism $S\to \P(E)\times \P(E)$ by assigning
$$ x \ \mapsto \ \left(\{e\in E: ex=x\},\; \{e\in E: xe=x\} \right) \,. $$

\section{Semigroupads}
\begin{deff}
A \emp{semigroupad} or \emp{semimonad} on a category $\ct C$ is a semigroup object in the category of 
endofunctors on $\ct C$, i.e. a monad $T:\ct C \to \ct C$ without the requirement of the unit 
transformation $\eta:1_{\ct C} \to T$ and its defining identities.\\
So, a semigroupad on $\ct C$ is a functor $T:\ct C \to \ct C$ equipped with a natural 
transformation $\mu:TT\to T$ which is 'associative' in the sense that $T\mu \c \mu = \mu T \c \mu$, i.e. 
for each object $a\in \Ob\ct C$ the following diagram commutes:
$$\dia{a^{TTT} \ar[r]^{a^{T\mu}} \ar[d]_{a^{\mu T}} & a^{TT} \ar[d]^{a^\mu} \\ 
a^{TT} \ar[r]_{a^\mu} & a^T} $$
\end{deff}

\begin{ex}[on $Set$] \phantom{.}
\begin{itemize}
  \item Let $T$ be the functor $A\mapsto A\times S$ for any fixed semigroup $S$, 
and let $\mu$ be the free $S$-action, i.e., 
$$A^\mu\ =\ \left((a,s_1),\,s_2 \right)\mapsto (a, \,s_1s_2)\, : (A\times S)\times S\to A\times S \,.$$
  \item (Finite) lists or subsets of size greater than a given minimum.
  \item Nonprincipal ultrafilters.
\end{itemize}
\end{ex}

\begin{thm}
For a given functor $T:\ct C\to\ct C$, a semigroupad structure $\mu:TT\to T$ determines and is
determined by a \emp{Kleisli lifting} operation which assigns to each arrow $f$ of the form 
$a\to b^T$ an arrow $\s f:a^T\to b^T$ so that the following conditions hold:
\begin{enumerate}
    \item[\ax{k1}] If $f:a\to b$ and $g:b\to c^T$ then $\s{(f\,g)} = f^T\, \s g$
    \item[\ax{k2}] If $f:a\to b^T$ and $g:b\to c$ then $\s{(f\,g^T)} = \s f\, g^T$
    \item[\ax{k3}] If $f:a\to b^T$ and $g:b\to c^T$ then $\s{(f\,\s g)} = \s f\; \s g$
\end{enumerate}
\end{thm}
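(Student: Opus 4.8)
The plan is to exhibit mutually inverse translations between semigroupad structures $\mu:TT\to T$ on $T$ and Kleisli liftings satisfying \ax{k1}--\ax{k3}. Given $\mu$, for each $f:a\to b^T$ I would set $\s f := f^T\c b^\mu:a^T\to b^T$ (the usual Kleisli-extension formula, which needs no unit). Conversely, given a lifting $f\mapsto\s f$, I would recover a candidate multiplication by $a^\mu := \s{(1_{a^T})}:a^{TT}\to a^T$, mimicking the classical identity $\mu_A=(1_{TA})^{\ast}$ for Kleisli triples.

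For the first translation I would verify the three axioms in order. Axiom \ax{k1} is pure functoriality of $T$: for $f:a\to b$ and $g:b\to c^T$ one has $\s{(f\c g)}=(f\c g)^T\c c^\mu=f^T\c g^T\c c^\mu=f^T\c\s g$. Axiom \ax{k2}, for $f:a\to b^T$ and $g:b\to c$, expands to $f^T\c g^{TT}\c c^\mu$, and the single naturality square of $\mu$ at $g$ (namely $g^{TT}\c c^\mu=b^\mu\c g^T$) turns this into $\s f\c g^T$. Axiom \ax{k3} is the only one consuming the semigroupad law: for $f:a\to b^T$, $g:b\to c^T$ one computes $(\s g)^T=g^{TT}\c c^{\mu T}$, so $\s{(f\c\s g)}=f^T\c g^{TT}\c c^{\mu T}\c c^\mu$; replacing $c^{\mu T}\c c^\mu$ by $c^{T\mu}\c c^\mu=(c^T)^\mu\c c^\mu$ via associativity, and then applying naturality of $\mu$ at the arrow $g:b\to c^T$ (whose codomain being $c^T$ is exactly what makes the component $(c^T)^\mu$ appear) collapses it to $f^T\c b^\mu\c g^T\c c^\mu=\s f\c\s g$.

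For the second translation, naturality of $a^\mu:=\s{(1_{a^T})}$ follows from showing that for every $h:a\to b$ both $h^{TT}\c b^\mu$ and $a^\mu\c h^T$ equal $\s{(h^T)}$: the first by \ax{k1} applied to the (plain) arrow $h^T$ followed by $1_{b^T}$, the second by \ax{k2} applied to $1_{a^T}$ followed by $h^T$. Associativity is handled the same way: both $(a^\mu)^T\c a^\mu$ and $(a^T)^\mu\c a^\mu$ equal $\s{(a^\mu)}$ --- the former by \ax{k1} (plain arrow $a^\mu$ followed by $1_{a^T}$), the latter by \ax{k3} (taking $f=1_{a^{TT}}$, $g=1_{a^T}$ and using $(a^T)^\mu=\s{(1_{a^{TT}})}$) --- which is precisely the square $a^{T\mu}\c a^\mu=a^{\mu T}\c a^\mu$. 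Finally I would check the two round trips: starting from $\mu$ one recovers $\s{(1_{a^T})}=(1_{a^T})^T\c a^\mu=a^\mu$; starting from a lifting, for $f:a\to b^T$ one recovers $f^T\c\s{(1_{b^T})}$, which is $\s{(f\c 1_{b^T})}=\s f$ by \ax{k1}.

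I expect the one genuinely delicate point to be \ax{k3} and its converse, the associativity of the reconstructed $\mu$: this is where the associativity square and a naturality square of $\mu$ must be invoked together, and where one has to keep the two whiskerings straight --- in the paper's convention $a^{\mu T}=(a^\mu)^T$ and $a^{T\mu}=(a^T)^\mu$ --- so that $(\s g)^T$ contributes a $c^{\mu T}$ which associativity converts into the $(c^T)^\mu$ that naturality of $\mu$ at an arrow with codomain $c^T$ is waiting to cancel. All the remaining steps are bookkeeping with functoriality and one naturality square, and carry no surprises.
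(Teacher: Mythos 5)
Your proposal is correct and follows essentially the same route as the paper: the same formula $\s f := f^T\c b^\mu$, the same verification of \ax{k1}--\ax{k3} via functoriality, naturality, and the associativity square, and the same reconstruction $a^\mu := \s{(1_{a^T})}$ with naturality from \ax{k1}/\ax{k2} and associativity from \ax{k1}/\ax{k3}. The only difference is that you spell out the two round-trip checks, which the paper dismisses as straightforward.
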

\begin{proof}
Assume first that the natural transformation $\mu:TT\to T$ defines a semigroupad structure, 
that is, $T\mu \c \mu = \mu T \c \mu$. 
The same way as for the Kleisli lifting for monads, for an arrow $f:a\to b^T$ we can define
$$\s f \ := \ f^T \c b^\mu\,. $$
\ax{k1} \ If $f:a\to b$ and $g:b\to c^T$ then 
$ \s{(f\,g})\ = \ f^T \, g^T\, c^\mu \ = \ f^T\, \s g$. \\[6pt]
\ax{k2} \ If $f:a\to b^T$ and $g:b\to c$ then, by naturality of $\mu$,
$$ \s{(f\,g^T)} \ = \ f^T\,g^{TT}\,c^\mu \ = \ f^T\,b^\mu\,g^T \ = \ \s f\,g^T \, . $$
\ax{k3} \ If $f:a\to b^T$ and $g:b\to c^T$ then, substituting $\s g=g^T\,c^\mu$ and applying
$\mu T\c \mu = T\mu \c \mu$ and naturality of $\mu$, we obtain
$$ \s{(f\,\s g)} \ =\ f^T\, g^{TT} c^{\mu T}\, c^\mu \ = \ 
f^T\, g^{TT} \, c^{T\mu}\, c^\mu \ = \ f^T\, b^\mu \, g^T\, c^\mu \ = \ \s f\; \s g \, . $$
For the converse direction, assume a Kleisli lifting is given. For any object $a\in\Ob\ct C$ define
$$a^\mu\ :=\ \s{(1_{a^T})} \, : a^{TT} \to a^T \,.$$
This is a natural transformation $TT\to T$ because for any arrow $f:a\to b$ in $\ct C$ we have
$$ f^{TT}\, b^\mu \ = \ f^{TT}\, \s{(1_{b^T})} \ \fl{\ax{k1}}= \ \s{(f^T\,1_{b^T})} \ = \ 
\s{(1_{a^T}\,f^T)} \ \fl{\ax{k2}}= \ \s{(1_{a^T})}\,f^T \ = \ a^\mu\,f^T \,. $$
Associativity of $\mu$ follows by
$$ a^{T\mu}\,a^\mu \ = \ \s{(1_{a^{TT}})} \, \s{(1_{a^T})} \ \fl{\ax{k3}}= \ 
\s{(1_{a^{TT}}\, \s{(1_{a^T})})} 
\ = \ \s{(a^\mu)}$$
$$ a^{\mu T}\, a^\mu \ = \ a^{\mu T}\, \s{(1_{a^T})} \ \fl{\ax{k1}}= \ 
\s{(a^\mu\,1_{a^T})} \ = \ \s{(a^\mu)} \,. $$
Finally, it is also straightforward to verify that these procedures are inverses to each other.
\end{proof}

\begin{rem}
As we can read from this proof, actually there is a one-to-one correspondence between
all natural transformations $\mu:TT\to T$ and Kleisli $\ast$ operations satisfying \ax{k1} and \ax{k2}
\end{rem}

\begin{rem}
The Kleisli category construction for a monad still gives rise to a \emp{semicategory} structure
(i.e., a category without the requirement of identity arrows), when the monad is replaced by a semigroupad.
The proof relies on \ax{k3}
\end{rem}

The following theorem, inspired by (...), is a generalization of the celebrated property of the 
ultrafilter monad $\beta:\Set \to \Set$  that if $S$ is a semigroup, then $S^\beta$ also admits 
a semigroup structure, extending the original one. It turns out that this property extends to
\emp{all} monads on $\Set$ and even to semigroupads.
\begin{thm}  \label{meta-sgr}
Let $T$ be a semigroupad on $\Set$ and let $S$ be a semigroup, then there is an 
induced semigroup operation on $S^T$.
\end{thm}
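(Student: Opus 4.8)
The plan is to imitate the classical construction of the semigroup operation on $S^\beta$ for the ultrafilter monad, but to phrase everything via the Kleisli lifting $\ast$ established in the previous theorem, so that only \ax{k1}--\ax{k3} are used (no unit). Suppose the semigroup multiplication on $S$ is $m : S\times S\to S$. The idea is to curry it: for each $s\in S$ we get a map $\lambda_s : S\to S$, $x\mapsto m(s,x)$, i.e.\ left translation by $s$, and this assignment is itself a map $\lambda : S\to S^S$. First I would produce, out of $\lambda$ and the semigroupad structure, a well-behaved "extended left translation'' $S^T \times S^T \to S^T$. Concretely, regard an element of $S^T$ as an arrow $1\to S^T$ from the terminal object, and note that a map $f:a\to b^T$ together with $\ast$ lets one push forward elements of $a^T$ to elements of $b^T$; iterating this in the two variables of $m$ is exactly what builds the operation.

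The cleanest route: define $\widehat m : S^T\times S^T \to S^T$ as follows. Fix the second argument first — for $y \in S^T$ (viewed as $1\to S^T$) consider the map $\rho_y : S \to S^T$ sending $s \mapsto$ (image of $y$ under $(\lambda_s)^T : S^T\to S^T$); this is a genuine $\Set$-map $S\to S^T$, so it has a lift $\s{(\rho_y)} : S^T\to S^T$, and we set $x * y := x^{\s{(\rho_y)}}$ for $x\in S^T$. Equivalently, one may first extend in the left variable. To see this is associative, take $x,y,z\in S^T$ and expand $(x*y)*z$ and $x*(y*z)$; both unwind to an expression of the form $x$ pushed through a composite of lifts, and the equality reduces, via \ax{k3} (used to collapse $\s{(f\,\s g)} = \s f\,\s g$) and \ax{k1}/\ax{k2} (used to slide ordinary maps past lifts), to the plain associativity of $m$ on $S$, i.e.\ to the identity $m\circ(m\times \mathrm{id}) = m\circ(\mathrm{id}\times m)$ after applying $T$ and the relevant $\mu$'s. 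The key point is that the associativity square for $\mu$ is precisely what licenses rearranging a double iterated lift, and this was already packaged into \ax{k3}.

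The main obstacle, and the step I would spend the most care on, is bookkeeping the currying: $m$ is a two-variable map, and to feed it into a one-variable Kleisli lifting I must decide an order of processing the two arguments and then check that associativity of $*$ does not secretly depend on that order. This is where naturality of $\mu$ (equivalently \ax{k1} and \ax{k2}) does the real work: sliding the untouched variable — which enters only through an ordinary function $(\lambda_s)^T$ or through a $\Set$-map like $\rho_y$ — past the lift applied in the other variable. I would set up a small diagram with $S^{TTT}$, $S^{TT}$, $S^T$ at the corners, decorated by $T$-images of $m$ and copies of $S^\mu$, and verify it commutes using the semigroupad axiom plus naturality; the associativity of $*$ then falls out by evaluating this diagram at the relevant elements. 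A secondary, purely cosmetic obstacle is that $\Set$ has a terminal object $1$, which lets me treat elements of $S^T$ as arrows $1\to S^T$ and thereby reuse the Kleisli machinery verbatim; without this one would phrase everything in terms of the maps $S^T\to S^T$ themselves, but the content is the same.

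Finally I would note that when $T$ carries a unit $\eta$ making it a genuine monad and $\eta_S$ is a semigroup homomorphism into $S^T$ — which it is, by \ax{k1}/\ax{k2} together with the unit laws — the construction recovers the familiar extension, and in particular for $T=\beta$ it recovers the classical semigroup structure on the space of ultrafilters; but none of this is needed for the statement, which asks only for an induced semigroup operation and so follows already from \ax{k1}--\ax{k3}.
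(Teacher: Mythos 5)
Your construction is the paper's own proof in mirror image: where the paper lifts the right-hand variable (setting $\bar p\colon x\mapsto p\,(\c x)^T$, $\tild p=\s{\bar p}$ and $p\b q:=q\,\tild p$), you lift the left-hand variable via $\rho_y\colon s\mapsto y^{(\lambda_s)^T}$ and set $x*y:=x^{\s{(\rho_y)}}$, and your associativity plan --- collapse the double lift with \ax{k3} and slide the fixed-variable map past the lift using \ax{k1}/\ax{k2}, naturality, and associativity of $S$ --- corresponds exactly to the paper's two lemmas ($(\c y)^T\,\tild p=\tild p\,(\c y)^T$ and $\tild{p\b q}=\tild q\,\tild p$). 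So the proposal is correct and takes essentially the same route; the only difference is the left/right convention, which for noncommutative $S$ produces the mirror operation on $S^T$, and that proves the theorem just as well.
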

\noindent
To achieve this, we introduce the following notation.\\
For an element $y\in S$, let $(\c y)$ denote the map $S\to S$ sending $x\mapsto xy$, and 
for any $p\in S^T$ we define
$$\bar p: S \to S^T,\ \ x\mapsto p\,(\c x)^T $$
so that $x\,\bar p$, the image of $x\in S$ under $\bar p$, is defined to be the image of $p\in S^T$ 
under the map $(\c x)^T:S^T\to S^T$.
Finally, set 
$$\tild p\ :=\ \s{\bar p}\ :\, S^T\to S^T $$
and define the operation for $p,q\in S^T$ as
$$p\b q := q\,\tild p\, .$$

\begin{lemma} Under the hypothesis of the theorem, for any $y\in S$ and $p\in S^T$ we have
$$(\c y)\,\bar p \ = \ \bar p\,(\c y)^T $$
$$(\c y)^T\,\tild p\ =\ \tild p\,(\c y)^T$$
\end{lemma}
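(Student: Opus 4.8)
The plan is to establish the first identity by a direct pointwise computation in $\Set$, and then to deduce the second from the first together with the Kleisli axioms \ax{k1} and \ax{k2}.

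For the first identity, I would evaluate the two self-maps $(\c y)\,\bar p$ and $\bar p\,(\c y)^T$ of... more precisely the two maps $S\to S^T$, at an arbitrary $x\in S$. On the one side, $(\c y)$ sends $x$ to $xy$ and then $\bar p$ sends $xy$ to $p\,(\c(xy))^T$. On the other side, $\bar p$ sends $x$ to $p\,(\c x)^T$, which $(\c y)^T$ then sends to $p\,\bigl((\c x)\c(\c y)\bigr)^T$ by functoriality of $T$. Hence the identity reduces to the equality of self-maps $(\c x)\c(\c y) = (\c(xy))$ of $S$, i.e. to $(zx)y = z(xy)$ for all $z\in S$, which is precisely associativity in $S$. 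This is the only place where associativity of $S$ enters.

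For the second identity I would use the first one inside a Kleisli lifting. Applying \ax{k2} with $f=\bar p:S\to S^T$ and $g=(\c y):S\to S$ gives $\s{(\bar p\,(\c y)^T)} = \s{\bar p}\,(\c y)^T = \tild p\,(\c y)^T$. Now substitute the first identity, $\bar p\,(\c y)^T = (\c y)\,\bar p$, under the lifting, and apply \ax{k1} with $f=(\c y):S\to S$ and $g=\bar p:S\to S^T$ to obtain $\s{((\c y)\,\bar p)} = (\c y)^T\,\s{\bar p} = (\c y)^T\,\tild p$. Comparing the two computations yields $\tild p\,(\c y)^T = (\c y)^T\,\tild p$, as claimed.

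I do not expect a genuine obstacle: both parts are only a few lines. The one point requiring care is bookkeeping the domains and codomains so that \ax{k1} and \ax{k2} are invoked in the correct form — with the ``$T$-side'' arrow $(\c y)^T$ landing on the appropriate side of the composite. Conceptually the lemma records the fact that $\tild p$ commutes with every right translation $(\c y)^T$, which is exactly the ingredient that will be needed to prove that the operation $p\b q := q\,\tild p$ is associative in Theorem~\ref{meta-sgr}.
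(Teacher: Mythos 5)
Your proposal is correct and follows essentially the same route as the paper: the first identity by a pointwise evaluation at $x\in S$, reducing via functoriality of $T$ to associativity $(\c x)\,(\c y)=(\c(xy))$, and the second by lifting the first identity with \ax{k1} and \ax{k2} (the paper writes the same chain starting from $(\c y)^T\,\tild p$, which is merely your computation read in the opposite direction). No gaps.
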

\begin{proof}
Applying both sides of the first equation to any element $x\in S$ and using 
associativity of the operation of $S$, we get
$$x\,(\c y)\,\bar p\ = \ (xy)\,\bar p\ =\ p\,(\cdot(xy))^T\ =\ p\,(\c x)^T\,(\c y)^T \ = \  
x\,\bar p\,(\c y)^T\,.$$
Then the second equation is obtained by Kleisli lifting the first one:
$$(\c y)^T\,\tild p\ = \ (\c y)^T\,\s{\bar p}\ \fl{\ax{k1}}=\ \s{\left( (\c y)\,\bar p\right)}\ =\ 
\s{\left(\bar p\,(\c y)^T \right)} \  \fl{\ax{k2}}= \ \s{\bar p}\,(\c y)^T\ = \ \tild p\,(\c y)^T \,.$$
\end{proof}

\begin{lemma} \label{tilde-dot}
Under the hypothesis of the theorem, for any $p,q \in S^T$ we have
$$\tild{p\b q} = \tild q\,\tild p$$
\end{lemma}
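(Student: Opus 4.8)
The plan is to unwind both sides of the claimed identity $\tild{p\b q} = \tild q\,\tild p$ using the definitions $\tild r = \s{\bar r}$ and $\bar r : x \mapsto r\,(\c x)^T$, then reduce everything to the Kleisli axioms \ax{k1}--\ax{k3} together with the previous lemma. First I would compute $\overline{p\b q}$ explicitly: since $p\b q = q\,\tild p$, we have $x\,\overline{p\b q} = (q\,\tild p)\,(\c x)^T = q\,(\c x)^T\,\tild p$ after commuting $\tild p$ past $(\c x)^T$ via the second identity of the previous lemma. But $q\,(\c x)^T = x\,\bar q$, so $x\,\overline{p\b q} = (x\,\bar q)\,\tild p = x\,(\bar q\,\tild p)$, giving $\overline{p\b q} = \bar q\,\tild p$ as maps $S \to S^T$.

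Next I would apply the Kleisli lifting to this factorization. Writing $\bar q : S \to S^T$ and noting $\tild p = \s{\bar p} : S^T \to S^T$, the composite $\bar q\,\tild p$ is of the form $\bar q\,\s{(\bar p)}$ where $\bar q : S \to S^T$ and $\bar p : S \to S^T$ — exactly the shape to which axiom \ax{k3} applies. Hence
$$\tild{p\b q} = \s{\left(\overline{p\b q}\right)} = \s{\left(\bar q\,\s{\bar p}\right)} \ \fl{\ax{k3}}= \ \s{\bar q}\;\s{\bar p} = \tild q\,\tild p \,,$$
which is the desired equality.

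The only genuine step requiring care is the first one — establishing $\overline{p\b q} = \bar q\,\tild p$ — because it is where the commutation lemma $(\c y)^T\,\tild p = \tild p\,(\c y)^T$ gets invoked (with $y := x$), and one must check the bookkeeping of which map is applied to which element, given the right-to-left convention for function application. Everything after that is a one-line appeal to \ax{k3}. So I expect no real obstacle; the lemma is essentially a repackaging of \ax{k3} once the preceding lemma has done the work of making $\tild p$ commute with the translation operators.
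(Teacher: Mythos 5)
Your argument is correct and matches the paper's proof essentially step for step: compute $x\,\overline{p\b q} = q\,\tild p\,(\c x)^T = q\,(\c x)^T\,\tild p = x\,\bar q\,\tild p$ using the commutation identity, conclude $\overline{p\b q} = \bar q\,\tild p$, and then apply \ax{k3} to get $\tild{p\b q} = \s{\bar q}\,\s{\bar p} = \tild q\,\tild p$. No gaps; the bookkeeping with the left-to-right application convention is handled correctly.
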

\begin{proof}
Using the previous lemma, for any $y\in S$ we have 
$$y\left(\barr{p\b q} \right) \ =\ (p\b q)\,(\c y)^T\ =\ q\,\tild p\,(\c y)^T \ = \ 
q\,(\c y)^T\,\tild p\ =\ y\,\bar q\,\tild p $$
yielding $\barr{p\b q}\ =\ \bar q\,\tild p$, so by \ax{k3} we arrive to $\tild{p\b q}\ =\ \tild q\,\tild p$.
\end{proof}

\begin{proof}[Proof of theorem \ref{meta-sgr}]
We show that the operation $\b$ on $S^T$ is associative. For any $p,q,r\in S^T$,
by lemma \ref{tilde-dot} we have
$$ (p\b q)\b r \ = \ r \left(\tild{p\b q}\right) \ = \ r\,\tild q\,\tild p \ 
= \ (q\b r)\,\tild p \ = \ p \b (q\b r) \, .$$
\end{proof}

\begin{deff} Let $(T,\mu)$ be a semigroupad on a category $\ct C$.
A natural transformation $\xi:1_{\ct C} \to T$ is called a
\begin{itemize}
  \item \emp{left identity} \ if \ $\xi T\c\mu = 1_T$,
  \item \emp{right identity} \ if \ $T\xi \c \mu = 1_T$,
  \item \emp{central constant} \ if \ $\xi T\c\mu \,=\, T\xi\c\mu$,
  \item \emp{idempotent constant} \ if \ $\xi \c \xi T\c\mu \,=\, \xi$.
\end{itemize}
\end{deff}
\noindent
We note that any natural transformation $\xi:1_{\ct C} \to T$ satisfies 
$\xi \c \xi T \ = \ \xi \c T\xi$, so $\xi$ is an idempotent constant if and only if
$\xi \c T\xi \c \mu\ = \ \xi$.

\begin{prop}
Let $(T,\mu)$ be a semigroupad on a category $\ct C$ with Kleisli lifting $f\mapsto \s f$,
and  let $\xi:1_{\ct C}\to T$ be a natural transformation.
Then the following statements hold:
\begin{enumerate}
  \item $\xi$ is a left identity \ if and only if \ for each $a\in\Ob\ct C$ we have
$$\s{(a^\xi)} \ = \ 1_{a^T} \,. $$
  \item $\xi$ is a right identity \ if and only if \ for each $f:a\to b^T$ we have
$$ a^\xi \c \s f\ = \ f  \,.$$
  \item $\xi$ is a central constant \ if and only if \ for each $f:a\to b^T$ we have
$$ a^\xi \c \s f \ = \ f \c \s{(b^\xi)} \,. $$
  \item $\xi$ is an idempotent constant \ if and only if \ for each $a\in\Ob\ct C$ we have
$$ a^\xi \c \s{(a^\xi)} \ = \ a^\xi \,. $$
\end{enumerate}
\end{prop}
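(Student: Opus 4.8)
The plan is to translate each of the four conditions on $\xi$ into a pointwise identity, using the correspondence between $\mu$ and the Kleisli lifting established above --- namely $\s f = f^T\c b^\mu$ for every $f:a\to b^T$ and $a^\mu = \s{(1_{a^T})}$ for every object $a$ --- together with naturality of $\xi$. Three short computations carry all the weight. First, for $f:a\to b^T$, naturality of $\xi$ at $f$ reads $a^\xi\c f^T = f\c(b^T)^\xi$, so
$$ a^\xi\c\s f \ =\ a^\xi\c f^T\c b^\mu\ =\ f\c(b^T)^\xi\c b^\mu\ =\ f\c(T\xi\c\mu)_b \,. $$
Second, putting $f=a^\xi$ (hence $b=a$) into $\s f=f^T\c b^\mu$ yields $\s{(a^\xi)} = (a^\xi)^T\c a^\mu = (\xi T\c\mu)_a$, the last equality being just the formula for the $a$-component of $\xi T\c\mu$. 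Third, composing the previous line with $a^\xi$ on the left, $a^\xi\c\s{(a^\xi)} = a^\xi\c(a^\xi)^T\c a^\mu = (\xi\c\xi T\c\mu)_a$.

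Given these, statements (1) and (4) are immediate. By the second computation, the condition in (1) holding for every $a$ is literally the assertion $(\xi T\c\mu)_a = 1_{a^T}$ for every $a$, i.e.\ $\xi T\c\mu=1_T$, which is the definition of a left identity. By the third computation, the condition in (4) holding for every $a$ is literally $(\xi\c\xi T\c\mu)_a = a^\xi$ for every $a$, i.e.\ $\xi\c\xi T\c\mu=\xi$, the definition of an idempotent constant.

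For (2) and (3) I would use the first computation. If $\xi$ is a right identity then $(T\xi\c\mu)_b=1_{b^T}$ for every $b$, whence $a^\xi\c\s f = f$ for all $f:a\to b^T$; conversely, if $a^\xi\c\s f=f$ for all such $f$, the first computation forces $f\c(T\xi\c\mu)_b=f$ for all $f:a\to b^T$, and taking $f=1_{b^T}$ gives $(T\xi\c\mu)_b=1_{b^T}$ for every $b$, i.e.\ $T\xi\c\mu=1_T$. This settles (2). For (3), using in addition $\s{(b^\xi)} = (\xi T\c\mu)_b$ from the second computation, the condition $a^\xi\c\s f = f\c\s{(b^\xi)}$ becomes $f\c(T\xi\c\mu)_b = f\c(\xi T\c\mu)_b$ for all $f:a\to b^T$; this holds when $\xi T\c\mu=T\xi\c\mu$, and conversely, taking $f=1_{b^T}$, it yields $(T\xi\c\mu)_b=(\xi T\c\mu)_b$ for every $b$, hence $T\xi\c\mu=\xi T\c\mu$, the definition of a central constant.

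I expect the only delicate point to be the whiskering bookkeeping under the paper's left-to-right composition convention: one must keep track that $\xi T$ has components $(a^\xi)^T$ whereas $T\xi$ has components $(a^T)^\xi=\xi_{a^T}$, so that $\s{(a^\xi)}$ is the $a$-component of $\xi T\c\mu$ (the \emph{left} identity condition) and not of $T\xi\c\mu$, and likewise that the factor appearing on the right in the first computation is $(T\xi\c\mu)_b$. Once that is pinned down, each displayed step is a single substitution, and there is no further obstacle.
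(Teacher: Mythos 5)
Your proof is correct and follows essentially the same route as the paper: express $\s f$ as $f^T\c b^\mu$, use naturality of $\xi$ to rewrite $a^\xi\c\s f$ as $f\c(T\xi\c\mu)_b$, and specialize $f$ to identity arrows for the converse directions. The whiskering bookkeeping you flag is handled exactly as in the paper ($(\xi T)_a=(a^\xi)^T$, $(T\xi)_a=(a^T)^\xi$), so there is no gap.
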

\begin{proof} Recall that for an arrow $f:a\to b^T$ we have $\s f \, = \, f^T\,b^\mu$.
\begin{enumerate}
  \item Obvious.
  \item If $\xi$ is a right identity, then by naturality of $\xi$ we get
$$ a^\xi \c \s f \ =\ a^\xi \c f^T \c b^\mu \ = \ f \c b^{T\xi} \c b^\mu \ = \ f \,.$$
Conversely, apply the hypothesis to $f=1_{a^T}:a^T\to a^T$ to get 
$$ 1_{a^T} \ = \ a^{T\xi} \c \s{(1_{a^T})} \ = \ a^{T\xi} \c a^\mu \,. $$
  \item If $\xi$ is central, then similarly as in 2., we have
$$ a^\xi \c \s f \ = \ f \c b^{T\xi} \c b^\mu \ = \ f\c b^{\xi T} \c b^\mu \ = \ 
f \c \s{(b^\xi)} \,. $$
Conversely, applying again the hypothesis to $f=1_{a^T}$, we arrive at
$$ a^{T\xi} \c a^\mu \ = \ a^{T\xi} \c \s{(1_{a^T})} \ = \ 1_{a^T} \c \s{(a^\xi)} 
\ = \ a^{\xi T} \c a^\mu \,.$$
  \item This readily follows from
$$a^\xi \c \s{(a^\xi)} \ = \ a^\xi \c a^{\xi T} \c a^\mu \,.$$
\end{enumerate}
\end{proof}

\begin{thm}
Assume a semigroupad $T$ on $\Set$ is given with Kleisli lifting $f\mapsto \s f$,
let $\xi:1_{\Set}\to T$ be a natural transformation, and let $(S,\c)$ be a semigroup
with a distinguished element $e\in S$. 
As a shorthand, we write $\xxi$ for $S^\xi:S\to S^T$ 
below.\\
Then the following statements hold:
\begin{enumerate}    
  \item If $\xi$ is a left identity for $T$ and $e$ is a left identity in $S$,
then $e^\xxi$ is a left identity in the induced semigroup on $S^T$.
  \item If $\xi$ is a right identity for $T$ and $e$ is a right identity in $S$,
then $e^\xxi$ is a right identity in the induced semigroup on $S^T$.
  \item If $\xi$ is a central constant for $T$ and $e$ is a central element in $S$,
then $e^\xxi$ is a central element in the induced semigroup on $S^T$.
  \item If $\xi$ is an idempotent constant for $T$ and $e$ is an idempotent element in $S$,
then $e^\xxi$ is an idempotent element in the induced semigroup on $S^T$.
\end{enumerate}
\end{thm}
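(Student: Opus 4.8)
The plan is to reduce each of the four statements to the matching clause of the previous proposition, after recording one identity that ties the semigroup structure of $S$ to the behaviour of $\xxi$. Throughout put $\epsilon := e^\xxi\in S^T$ and — dually to the notation $(\c y)$ — let $(e\c):S\to S$ denote the left translation $x\mapsto ex$. Recall $\epsilon\b q = q\tild\epsilon$ and $q\b\epsilon = \epsilon\tild q$, so statement (1) asks for $q\tild\epsilon=q$ for all $q$, statement (2) for $\epsilon^{\tild q}=q$, statement (3) for $q^{\tild\epsilon}=\epsilon^{\tild q}$, and statement (4) for $\epsilon^{\tild\epsilon}=\epsilon$.

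The heart of the argument is the computation of $\bar\epsilon$. Since $\epsilon = e^{S^\xi}$, naturality of $\xi$ at the map $(\c x)$ of $\Set$ gives, for every $x\in S$,
\[ x^{\bar\epsilon}\ =\ \epsilon^{(\c x)^T}\ =\ \bigl(e^{S^\xi}\bigr)^{(\c x)^T}\ =\ \bigl(e^{(\c x)}\bigr)^{S^\xi}\ =\ (ex)^\xxi\,. \]
Writing $ex = x^{(e\c)}$ and applying naturality of $\xi$ at $(e\c)$ this becomes $\bar\epsilon = (e\c)\c S^\xi = S^\xi\c(e\c)^T$, and applying \ax{k2} to the second factorization yields the companion identity $\tild\epsilon = \s{(S^\xi)}\c(e\c)^T$. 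These two formulas are the only place the semigroup axioms of $S$ enter; I expect the book-keeping of composition order in them, and the need to keep the roles of $(e\c)$ and $(\c e)$ straight, to be the fussiest point.

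Granting them, parts (1), (2) and (4) are quick. For (1): $e$ a left identity of $S$ makes $(e\c)=1_S$, so $\bar\epsilon=S^\xi$ and $\tild\epsilon=\s{(S^\xi)}$, which equals $1_{S^T}$ since $\xi$ is a left identity for $T$; hence $\epsilon\b q=q\tild\epsilon=q$. For (2): apply the right-identity clause of the proposition to the arrow $\bar q:S\to S^T$, giving $S^\xi\c\tild q=\bar q$; evaluating at $e$ and using that $e$ is a right identity of $S$ (so $e^{\bar q}=q^{(\c e)^T}=q$) we get $q\b\epsilon=\epsilon\tild q=e^{S^\xi\c\tild q}=e^{\bar q}=q$. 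For (4): $e$ idempotent gives $e^{\bar\epsilon}=(ee)^\xxi=\epsilon$, while the idempotent-constant clause gives $S^\xi\c\s{(S^\xi)}=S^\xi$, hence $S^\xi\c\tild\epsilon=S^\xi\c\s{(S^\xi)}\c(e\c)^T=S^\xi\c(e\c)^T=\bar\epsilon$; evaluating at $e$ gives $\epsilon\b\epsilon=\epsilon^{\tild\epsilon}=e^{S^\xi\c\tild\epsilon}=e^{\bar\epsilon}=\epsilon$.

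For (3) I would first record the commutation $\s{(S^\xi)}\c(e\c)^T=(e\c)^T\c\s{(S^\xi)}$: by \ax{k2} together with naturality of $\xi$ at $(e\c)$ the left side equals $\s{\bigl((e\c)\c S^\xi\bigr)}$, and by \ax{k1} so does the right side. Then, using the central-constant clause of the proposition at the arrow $\bar q$ (giving $S^\xi\c\tild q=\bar q\c\s{(S^\xi)}$) together with the centrality of $e$ in $S$ (giving $(\c e)=(e\c)$ as maps $S\to S$), one finds
\[ \epsilon^{\tild q}\ =\ e^{S^\xi\c\tild q}\ =\ \bigl(e^{\bar q}\bigr)^{\s{(S^\xi)}}\ =\ q^{(e\c)^T\c\s{(S^\xi)}}\qquad\text{and}\qquad q^{\tild\epsilon}\ =\ q^{\s{(S^\xi)}\c(e\c)^T}\,, \]
which coincide by the commutation; thus $\epsilon\b q=q\tild\epsilon=q^{\tild\epsilon}=\epsilon^{\tild q}=\epsilon\tild q=q\b\epsilon$. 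The one ingredient genuinely outside the proposition is this small commutation lemma, so together with the derivation of the two $\bar\epsilon$-formulas I expect it to be the crux; everything else is routine substitution.
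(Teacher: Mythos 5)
Your proof is correct and follows essentially the same route as the paper: both reduce each clause to the corresponding Kleisli-level identity of the preceding proposition together with naturality of $\xi$, via the key computation $x^{\overline{e^\xxi}}=(ex)^\xxi$, i.e. $\tild{e^\xxi}=\s{(S^\xi)}\c(e\c)^T$. Even your commutation step in part (3) is just an explicit packaging of the paper's observation that both sides equal $p\,\s{\left((\c e)\,\xxi\right)}$.
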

\begin{proof}
Let $p\in S^T$ be an arbitrary element. Note that by naturality we have
$f\,\xxi \,=\, \xxi\,f^T $ for any $f:S\to S$.
\begin{enumerate}
  \item $e^\xxi \b p \ = \ p\;\tild{e^\xxi}\ =\ p\;\s{\left(x\mapsto e^\xxi \,(\c x)^T \right)} \ 
= \ p\;\s{\left(x\mapsto e \, (\c x) \, \xxi \right)}\ =\ p\;\s\xxi\ =\ p$.
  \item $p \b e^\xxi \ = \ e^\xxi\;\tild p \ = \ e^\xxi\;\s{\left(x\mapsto p\,(\c x)^T \right)} \
= \ e \left(x\mapsto p\,(\c x)^T \right) \ = \ p\,(\c e)^T\ = \ p$.
  \item $e^\xxi \b p \ = \  p\;\s{\left(x\mapsto (ex)^\xxi \right)}\ =\ 
p\;\s{\left(x\mapsto (xe)^\xxi \right)}\ = \ p\,\s{\left((\c e)\,\xxi \right)}$, \\
  $p \b e^\xxi \,= \, e^\xxi\;\s{\left(x\mapsto p\,(\c x)^T \right)} \, = \, 
e \left( x\mapsto p\,(\c x)^T \right) \s\xxi  \, = \, p\,(\c e)^T\;\s\xxi\, = \, 
p\,\s{\left((\c e)\,\xxi \right)}$.
  \item $e^\xxi \b e^\xxi \, =\, e^\xxi\;\s{\left(x \mapsto e^\xxi(\c x)^T \right)}\, = \, 
e^\xxi\;\s{\left(x \mapsto (ex)^\xxi \right)} \, = \, 
e^\xxi \; \s{\left( (e\c)\,\xxi \right)}\, =$ \\ 
  $= \, e^\xxi\; \s{\left( \xxi  \,(e\c)^T\right)} \ = e\,\xxi\, \s\xxi\,(e\c)^T \ =\ 
e\,\xxi \, (e\c)^T\ =\ e (e\c)\, \xxi\ =\ e^\xxi $.
\end{enumerate}

\end{proof}

\end{document}